\newtheorem{theorem}{Theorem}
\newtheorem{prop}{Proposition}
\newtheorem{lemma}{Lemma}     
\newtheorem{coro}{Corollary}
\theoremstyle{definition}
\newcommand{\ts}{\hspace{0.5pt}}
\newcommand{\nts}{\hspace{-0.5pt}}
\newcommand{\NN}{\mathbb{N}}
\newcommand{\PP}{\mathbb{P}}
\newcommand{\cA}{\mathcal{A}}
\newcommand{\cB}{\mathcal{B}}
\newcommand{\cC}{\mathcal{C}}
\newcommand{\cM}{\mathcal{M}}
\newcommand{\cP}{\mathcal{P}}
\newcommand{\pa}{\hphantom{g}\nts\nts}
\newcommand{\vph}{\vphantom{I}}
\newcommand{\dd}{\,\mathrm{d}}
\newcommand{\ee}{\ts\mathrm{e}}
\newcommand{\one}{\mathbbm{1}}
\newcommand{\pmin}{\ts\ts\underline{\nts\nts 0\nts\nts}\ts\ts}
\newcommand{\pmax}{\ts\ts\underline{\nts\nts 1\nts\nts}\ts\ts}
\newcommand{\bigtim}{\mbox{\LARGE $\times$}}
\begin{document}

\title[Solving the recombination equation via Haldane linearisation]
{Haldane linearisation done right:\ Solving the \\[2mm]
nonlinear  recombination equation the easy way}

\author{Ellen Baake}
\address{Technische Fakult\"at, Universit\"at Bielefeld, 
         Postfach 100131, 33501 Bielefeld, Germany}

\author{Michael Baake}
\address{Fakult\"at f\"ur Mathematik, Universit\"at Bielefeld, 
         Postfach 100131, 33501 Bielefeld, Germany}

\begin{abstract} 
  The nonlinear recombination equation from population genetics has a
  long history and is notoriously difficult to solve, both in
  continuous and in discrete time. This is particularly so if one aims
  at full generality, thus also including degenerate parameter
  cases. Due to recent progress for the continuous time case via the
  identification of an underlying stochastic fragmentation process, it
  became clear that a direct general solution at the level of the
  corresponding ODE itself should also be possible.  This paper shows
  how to do it, and how to extend the approach to the discrete-time
  case as well.
\end{abstract}

\maketitle

\section{Introduction}

The \emph{recombination equation} is a well-known dynamical system
from mathematical population genetics \cite{Lyu,Chris,Buerger,EB-ICM},
which describes the evolution of the genetic composition of a
population that evolves under recombination. The genetic composition
is described via a probability distribution (or measure) on a space of
sequences of finite length, and recombination is the genetic mechanism
in which two parent individuals are involved in creating the mixed
sequence of their offspring during sexual reproduction.  The model
comes in a continuous-time and a discrete-time version. It can
accommodate a variety of different mechanisms by which the genetic
material of the offspring is partitioned across its parents.  In all
cases, the resulting equations are nonlinear and notoriously difficult
to solve. Elucidating the underlying structure and finding solutions
has been a challenge to theoretical population geneticists for nearly
a century now.

The first studies go back to Jennings in 1917 \cite{Jennings} and
Robbins in 1918 \cite{Robbins}.  Geiringer in 1944 \cite{Gei} and
Bennett in 1954 \cite{Ben} were the first to state the generic general
\emph{form} of the solution in terms of a convex combination of
certain basis functions, and developed methods for the recursive
evaluation of the corresponding coefficients to obtain the solution
itself, at least in principle.  The approach was later continued
within the systematic framework of genetic algebras; compare
\cite{Lyu, HaleRingwood}. It could be shown that, despite the
nonlinearity, the dynamical system may be (exactly) transformed into a
linear one by embedding it into a higher-dimensional space. More
explicitly, a large number of further components are added that
correspond to multilinear transformations of the original
measure. This method is known as \emph{Haldane linearisation}
\cite{HaleRingwood}.  However, this line of research led to
astonishingly few concrete or applicable results. This raises the
question whether the program may be completed outside the abstract
framework, and what kinds of results can be obtained via different
approaches.

A first step forward, for an important special case, was achieved in
\cite{BB}, via a rather powerful use of the inclusion-exclusion
principle in the form of the M\"{o}bius inversion formula. At the same
time, a general formalism via nonlinear operators, called
\emph{recombinators}, was introduced that also allowed for an
alternative consideration starting from the nonlinear equation with a
single such operator and extending this to the same solution
\cite{MB}.

After various intermediate steps of gradual generalisations, the
complete equation, in the setting of general partitions, was analysed
and solved in \cite{BBS}. In the generic parameter case, the general
solution was given in recursive form. Also, the principal form of the
solution in degenerate cases was analysed, but no general formula was
given. The most important insight, however, was the identification of
an underlying stochastic fragmentation process in \cite[Sec.~6]{BBS}.
This means that the solution of the nonlinear recombination equation
has a representation in terms of the solution of the Kolmogorov
forward equation for this very process, which is a \emph{linear} ODE.

In \cite{interval}, the slightly simpler setting of ordered or
interval partitions was analysed, with focus on explicit solution
formulas for \emph{all} parameter values (thus including the
degenerate cases) for sequences of length up to five. Here, the
above-mentioned Kolmogorov equation was investigated further, and the
Markov generator from \cite[Sec.~6]{BBS} was derived explicitly. This
demonstrated two important things:
\begin{enumerate}
\item The solvability of the nonlinear recombination ODE ultimately
  rests upon the fact that this solution essentially also solves a
  system of linear equations;\vspace{2mm}
\item The degenerate cases are in one-to-one correspondence to the
  cases where the Markov generator fails to be diagonalisable, and the
  appearance of Jordan blocks, well known from classic ODE theory,
  determines the solutions then.
\end{enumerate}
Now, with hindsight, one can ask whether one can treat the original
nonlinear ODE in such a way that this becomes immediately transparent,
without resorting to the underlying stochastic process. The answer is
affirmative, and this paper explains how to do it. Effectively, this
new approach means to re-interpret the original Haldane linearisation
in a suitable way, without any need for genetic algebras.  Moreover,
as we shall see, a completely analogous approach also works for the
discrete-time recombination equation.  \smallskip

This paper builds on previous work, most importantly on
\cite{BBS,interval}. Some of the results from these papers will be
freely used below, and not re-derived here (though we will always
provide precise references). Also, the biological background is
explained in \cite{BBS}.  After recalling the preliminaries and our
notation in Section~\ref{sec:partitions}, the general recombination
equation in continuous time, together with its reduction to subsystems
via marginalisation, is discussed in Section~\ref{sec:gen-reco}. This
is followed by its general solution via our new and simplified
strategy (Section~\ref{sec:fast}), which leads to the first main
result in Theorem~\ref{thm:main}. A stratified interpretation in terms
of the underlying partitioning process is offered in
Section~\ref{sec:partitioning}, which gives our second main result
(Theorem~\ref{thm:a-part}).

The discrete-time version of the recombination equation is then
discussed and solved in Section~\ref{sec:discrete}, by the same method,
which leads to our third main result in Theorem~\ref{thm:discrete}. The 
corresponding stochastic process is also identified and briefly
summarised.

\section{Partitions, product spaces, measures and 
recombinators}\label{sec:partitions}

Let $S$ be a finite set, and consider the lattice  $\PP (S)$ of
partitions of $S$; see \cite{Aigner} for general background on lattice
theory and \cite{BBS} for details of the present setting.  Here, we
write a partition of $S$ as $\cA = \{ A_{1}, \dots , A_{m} \}$, where
$m = |\cA|$ is the number of its (non-empty) parts (also called
blocks), and one has $A_{i} \cap A_{j} = \varnothing$ for all $i\ne j$
together with $A_{1} \cup \dots \cup A_{m} = S$. The natural ordering
relation is denoted by $\preccurlyeq$, where $\cA \preccurlyeq \cB$
means that $\cA$ is \emph{finer} than $\cB$, or that $\cB$ is
\emph{coarser} than $\cA$.  The conditions $\cA \preccurlyeq \cB$ and
$\cB \succcurlyeq \cA$ are synonymous, while $\cA \prec\cB$ means $\cA
\preccurlyeq \cB$ together with $\cA \ne \cB$, so $\cA$ is strictly
finer than $\cB$.

The joint refinement of two partitions $\cA$ and $\cB$ is written as
$\cA \wedge \cB$, and is the coarsest partition below $\cA$ and $\cB$.
The unique \emph{minimal} partition within the lattice $\PP (S)$ is
denoted as $\pmin = \big\{ \{x\} \mid x \in S \big\}$, while the
unique \emph{maximal} one is $\pmax = \{ S \}$.  When $U$ and $V$ are
disjoint (finite) sets, two partitions $\cA\in\ts\PP(U)$ and
$\cB\in\ts\PP(V)$ can be joined to form an element of $\PP(U\nts \cup
V)$. We denote such a \emph{joining} by $\cA\sqcup \cB$, and similarly
for multiple joinings.  Conversely, if $U\nts\subseteq S$, a partition
$\cA\in\ts\PP(S)$, with $\cA = \{ A_{1}, \dots , A_{m} \}$ say,
defines a unique partition of $U$ by restriction. The latter is
denoted by $\cA|^{\pa}_{U}$, and its parts are precisely all non-empty
sets of the form $A_{i} \cap U$ with $1\leqslant i \leqslant
m$. \smallskip

Fix now $S=\{ 1,2,\dots ,n\}$ and define $X := X_{1} \times \dots
\times X_{n}$, where each $X_{i}$ is a locally compact space (which we
mean to include the Hausdorff property).  The natural projection of
$X$ to its $i$th component is denoted by $\pi_{i}$, so $\pi_{i} (X)
= X_{i}$. For an arbitrary non-empty subset $U\nts\subseteq S$, we use
the notation $\pi^{\pa}_{U} \!  : \; X \longrightarrow X^{\pa}_{U}:=
\bigtim_{i\in U} X_{i}$ for the projection to $X^{\pa}_{U}$.

Let $\cM (X)$ denote the space of signed, finite and regular Borel
measures on $X$, equipped with the usual total variation norm $\|
. \|$, which makes it into a Banach space. Also, we need the closed
subset (or cone) $\cM_{+} (X)$ of positive measures, which includes
the zero measure. Within $\cM_{+} (X)$, we denote the closed subset of
probability measures by $\cP (X)$. Note that $\cM_{+} (X)$ and
$\cP(X)$ are convex sets.  The restriction of a measure $\mu\in\cM
(X)$ to a subspace $X^{\pa}_{U}$ is written as $\pi^{\pa}_{U} . \mu :=
\mu \circ \pi^{-1}_{U}$, which is consistent with marginalisation of
measures. When the context is clear, we will use the abbreviation
$\mu^{U}\nts := \pi^{\pa}_{U}.\mu$. For any Borel set $A\subseteq
X^{\pa}_{U}$, one thus has the relation $\mu^{U}\! (A) =
\bigl(\pi^{\pa}_{U} . \mu\bigr) (A) = \mu \bigl( \pi^{-1}_{U}
(A)\bigr)$.

Given a measure $\mu \in \cM(X)$ and a partition $\cA = \{ A_{1},
\dots ,A_{m} \} \in \PP (S)$, we define the mapping $R^{\pa}_{\nts \cA} \!
: \, \cM(X)\longrightarrow \cM(X)$ by $\mu \mapsto R^{\pa}_{\nts \cA}
(\mu)$ with $R^{\pa}_{\nts \cA} (0) := 0$ and, for $\mu\ne 0$,
\begin{equation}\label{eq:def-recomb}
    R^{\pa}_{\nts \cA} (\mu) \, := \, \frac{1}{\| \mu \|^{m-1}}
    \bigotimes_{i=1}^{m} \bigl(\pi^{\pa}_{\! A_{i}} . \ts \mu\bigr)
    \, = \, \frac{\ts \mu^{A^{}_{1}} \otimes \dots \otimes
            \mu^{A^{}_{m}}}{ \| \mu \|^{m-1}}  \ts .
\end{equation}
Note that the product is (implicitly) `site ordered', which means that
it matches the ordering of the sites as specified by the set $S$. We
shall use (implicit) site ordering also for product sets. We call a
mapping of type $R^{\pa}_{\!\cA}$ a \emph{recombinator}. Note that
recombinators are nonlinear whenever $\cA \ne \pmax$.

Let us recall some results from \cite[Prop.~1 and Cor.~1]{BBS} as
follows.

\begin{prop}\label{prop:gen-props}
  Let\/ $S=\{ 1,2,\dots ,n \}$ and\/ $X=X_{1} \times \dots \times
  X_{n}$ as above. Now, let\/ $\cA \in \PP(S)$ be arbitrary, and
  consider the corresponding recombinator\/ $R^{\pa}_{\nts \cA}$ as
  defined by Eq.~\eqref{eq:def-recomb}. Then, the following assertions
  are true.
\begin{enumerate}\itemsep=1pt
\item $R^{\pa}_{\! \cA}$ is positive homogeneous of degree\/ $1$,
  which means that\/ $R^{\pa}_{\! \cA} (a \mu) = a R^{\pa}_{\! \cA}
  (\mu)$ holds for all\/ $\mu\in\cM(X)$ and all\/ $a\geqslant 0$.
\item $R^{\pa}_{\! \cA}\vph$ is globally Lipschitz on\/ $\cM(X)$, with
  Lipschitz constant\/ $L\leqslant 2\ts \lvert \cA \rvert + 1$.
\item On\/ $\cP (X)$, the recombinator\/ $R^{\pa}_{\! \cA}$ is
  Lipschitz with $L\leqslant \lvert \cA \rvert$.
\item $\| R^{\pa}_{\! \cA} (\mu)\| \leqslant \| \mu \|\vph$ holds for
  all\/ $\mu\in\cM(X)$.
\item $R^{\pa}_{\! \cA}\vph$ maps\/ $\cM_{+} (X)$ into itself.
\item $R^{\pa}_{\! \cA}\vph$ preserves the norm of positive measures,
  and hence also maps\/ $\cP (X)$ into itself.
\item On\/ $\cM_{+} (X)$, the recombinators satisfy $R^{\pa}_{\nts
    \cA} R^{\pa}_{\cB} = R^{\pa}_{\nts \cA \wedge \cB} $.  In
  particular, each recombinator is an idempotent and any two
  recombinators commute.  \qed
\end{enumerate}
\end{prop}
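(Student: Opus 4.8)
Assertions (1), (4), (5) and (6) I would read off directly from the definition in Eq.~\eqref{eq:def-recomb}, using two standard properties of the total variation norm: it is multiplicative under tensor products, $\|\rho\otimes\sigma\| = \|\rho\|\ts\|\sigma\|$, and marginalisation is norm non-increasing, $\|\pi^{\pa}_{U}.\mu\| \leqslant \|\mu\|$, with equality whenever $\mu$ is positive. For (1), the linearity of each projection together with $\|a\mu\| = a\|\mu\|$ for $a\geqslant 0$ makes numerator and denominator pick up factors $a^{m}$ and $a^{m-1}$, where $m=|\cA|$. For (4), multiplicativity gives $\|R_{\cA}(\mu)\| = \prod_{i}\|\mu^{A_{i}}\|\big/\|\mu\|^{m-1} \leqslant \|\mu\|^{m}\big/\|\mu\|^{m-1}$. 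Assertion (5) holds because marginals and tensor products of positive measures are positive, and (6) because the marginals of a positive measure preserve its total mass, so the inequalities in (4) become equalities.

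The Lipschitz estimates (2) and (3) both rest on the telescoping identity
\begin{equation*}
  \bigotimes_{i=1}^{m}\alpha_{i} - \bigotimes_{i=1}^{m}\beta_{i}
  \, = \, \sum_{j=1}^{m} \beta_{1}\otimes\dots\otimes\beta_{j-1}
   \otimes (\alpha_{j}-\beta_{j}) \otimes
   \alpha_{j+1}\otimes\dots\otimes\alpha_{m} \ts .
\end{equation*}
Applied with $\alpha_{i} = \mu^{A_{i}}$ and $\beta_{i}=\nu^{A_{i}}$, and combined with the two norm properties above and the linearity of marginalisation (whence $\|\mu^{A_{j}}-\nu^{A_{j}}\| = \|\pi^{\pa}_{A_{j}}.(\mu-\nu)\| \leqslant \|\mu-\nu\|$), it bounds the numerator difference by $\|\mu-\nu\|\sum_{j}\|\nu\|^{j-1}\|\mu\|^{m-j}$. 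On $\cP(X)$ all norms equal $1$ and the denominator is trivial, which immediately yields (3) with constant $m=|\cA|$.

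For the global statement (2) the plan is to exploit positive homogeneity, and I expect the denominator $\|\mu\|^{m-1}$ (and the behaviour near the origin) to be the main obstacle. Writing $\hat\mu := \mu/\|\mu\|$ and $\hat\nu := \nu/\|\nu\|$ for $\mu,\nu\neq 0$, assertion (1) gives $R_{\cA}(\mu) = \|\mu\|\ts R_{\cA}(\hat\mu)$, and since $\hat\mu$ has norm $1$ the denominator disappears, so $R_{\cA}(\hat\mu)$ is simply the tensor product of marginals. The decomposition
\begin{equation*}
  R_{\cA}(\mu) - R_{\cA}(\nu) \, = \,
  \|\mu\|\ts\bigl(R_{\cA}(\hat\mu) - R_{\cA}(\hat\nu)\bigr)
  + \bigl(\|\mu\|-\|\nu\|\bigr)\ts R_{\cA}(\hat\nu)
\end{equation*}
then separates the two effects. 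The second term is controlled by (4) and the reverse triangle inequality $\bigl|\|\mu\|-\|\nu\|\bigr|\leqslant\|\mu-\nu\|$, contributing the summand $1$. For the first term, the telescoping bound supplies the factor $m$, and the remaining point is the estimate $\|\mu\|\ts\|\hat\mu-\hat\nu\| = \|\mu - (\|\mu\|/\|\nu\|)\ts\nu\| \leqslant \|\mu-\nu\| + \bigl|\|\nu\|-\|\mu\|\bigr| \leqslant 2\|\mu-\nu\|$, obtained by adding and subtracting $\nu$. Collecting terms yields the constant $2m+1 = 2|\cA|+1$, while the cases $\mu=0$ or $\nu=0$ are covered directly by (4).

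Finally, for the semigroup relation (7) on $\cM_{+}(X)$, I would compute $R_{\cA}\bigl(R_{\cB}(\mu)\bigr)$ explicitly. Put $\nu := R_{\cB}(\mu)$ and $c:=\|\mu\|$; by (6), $\|\nu\|=c$, and $\nu$ is the normalised product of the marginals $\mu^{B}$ over $B\in\cB$. The key step is that the marginal $\nu^{A}$ of this product measure to a block $A\in\cA$ factorises over the blocks $B$ meeting $A$, the factors being the marginals $\mu^{A\cap B}$, while the blocks disjoint from $A$ contribute only their total mass $c$. Inserting this into the definition of $R_{\cA}(\nu)$ reassembles precisely the tensor product over all non-empty intersections $A\cap B$, that is, over the blocks of $\cA\wedge\cB$. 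It then remains to check that the powers of $c$ match, which follows from the counting identity $\sum_{A\in\cA}\lvert\{B\in\cB : A\cap B\neq\varnothing\}\rvert = \lvert\cA\wedge\cB\rvert$; this gives $R_{\cA}R_{\cB} = R_{\cA\wedge\cB}$. Idempotence (from $\cA\wedge\cA=\cA$) and commutativity (from the symmetry of $\wedge$) are then immediate.
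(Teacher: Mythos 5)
Your proof is correct, but note that there is nothing in this paper to compare it against: Proposition~\ref{prop:gen-props} is \emph{recalled} from the reference [BBS, Prop.~1 and Cor.~1] and stated here without proof, so you have supplied a self-contained argument where the paper relies on a citation. I checked your argument in detail and found no gaps. Parts (1), (4), (5), (6) follow, as you say, from linearity of marginalisation, multiplicativity of the total variation norm under tensor products (which holds for signed measures since $\lvert\rho\otimes\sigma\rvert = \lvert\rho\rvert\otimes\lvert\sigma\rvert$), and the fact that marginalisation is norm non-increasing in general and mass-preserving on $\cM_{+}(X)$. Your telescoping identity is the standard device for such Lipschitz bounds and gives (3) directly; for (2), the decomposition $R^{\pa}_{\nts\cA}(\mu)-R^{\pa}_{\nts\cA}(\nu) = \|\mu\|\bigl(R^{\pa}_{\nts\cA}(\hat\mu)-R^{\pa}_{\nts\cA}(\hat\nu)\bigr) + \bigl(\|\mu\|-\|\nu\|\bigr)R^{\pa}_{\nts\cA}(\hat\nu)$ together with $\|\mu\|\ts\|\hat\mu-\hat\nu\|\leqslant 2\ts\|\mu-\nu\|$ cleanly yields the constant $2\ts\lvert\cA\rvert+1$ claimed in the statement (the paper remarks that $2\ts\lvert\cA\rvert-1$ is possible, but does not ask you to prove that). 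For (7), your factorisation of the marginal $\nu^{A}$ of the product measure $\nu=R^{\pa}_{\cB}(\mu)$ over the blocks of $\cB$ meeting $A$, with the disjoint blocks contributing total mass $c=\|\mu\|$, is exactly the right mechanism; I verified your power count: the exponent of $c$ works out to $\sum_{A\in\cA}\bigl(\lvert\cB\rvert-k_{A}\bigr) - \bigl(\lvert\cA\rvert-1\bigr) - \lvert\cA\rvert\bigl(\lvert\cB\rvert-1\bigr) = 1-\lvert\cA\wedge\cB\rvert$ with $k_{A}$ the number of blocks of $\cB$ meeting $A$, as required for $R^{\pa}_{\nts\cA\wedge\cB}(\mu)$, and positivity of $\mu$ is used precisely where it must be (the integrated-out blocks contribute $c$ rather than a possibly smaller number). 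One cosmetic remark: in (7) you should say explicitly that the case $\mu=0$ is trivial by the convention $R^{\pa}_{\nts\cA}(0)=0$, just as you did for (2).
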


Several of these properties will be used below without further
mentioning, some in results that we simply recall from previous work.
Let us mention (without proof) that the Lipschitz constant in claim 2
can be improved to $L \leqslant 2\ts \lvert \cA \rvert - 1$.
We are now set to define and analyse the recombination ODE.

\section{The general recombination equation and 
marginalisation}\label{sec:gen-reco}

The general recombination equation in continuous time is formulated
within the Banach space $(\cM(X), \|.\|)$, as the nonlinear ODE
\begin{equation}\label{eq:reco-eq}
   \dot{\omega}^{\pa}_{t} \, = \!
   \sum_{\cA\in \PP (S)} \! \! \varrho(\cA) \ts
   \bigl( R^{\pa}_{\nts\cA} - \one \bigr) (\omega^{\pa}_{t})
\end{equation}
with non-negative numbers $\varrho(\cA)$ that have the meaning of
\emph{recombination rates} in our context.  We will usually assume
that an initial condition $\omega^{\pa}_{0} \in \cM(X)$ for $t=0$ is
given for the ODE \eqref{eq:reco-eq}, and then speak of the
corresponding \emph{Cauchy problem} (or initial value problem).

With $\varPhi := \sum_{\cA\in \PP (S)} \varrho(\cA) \ts \bigl(
R^{\pa}_{\nts\cA} - \one \bigr)$, we can now simply write
\begin{equation}\label{eq:short}
     \dot{\omega}^{\pa}_{t} \, = \, \varPhi (\omega^{\pa}_{t}) \ts ,
\end{equation}
but we must keep in mind that $\varPhi$ is a nonlinear operator.
Nevertheless, one has the following basic result \cite{BB,BBS}; see
\cite{Amann} for general background on ODEs on Banach spaces.

\begin{prop}\label{prop:gensol}
  Let\/ $S$ be a finite set and\/ $X$ the corresponding locally
  compact product space as introduced above. Then, the Cauchy problem
  of Eq.~\eqref{eq:reco-eq} with initial condition\/ $\omega^{\pa}_{0}
  \in \cM (X)$ has a unique solution.  Moreover, the cone\/ $\cM_{+}
  (X)$ is forward invariant, and the flow is norm-preserving on\/
  $\cM_{+} (X)$.  In particular, $ \cP (X)$ is forward invariant under
  the flow. \qed
\end{prop}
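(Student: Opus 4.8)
The plan is to treat Eq.~\eqref{eq:short} as an ODE with a globally Lipschitz right-hand side, and then read off the geometric properties from the structure of $\varPhi$. First I would settle existence and uniqueness. Since each $R^{\pa}_{\nts\cA}$ is globally Lipschitz on $\cM(X)$ by Proposition~\ref{prop:gen-props}(2), and $\one$ is Lipschitz with constant $1$, the finite combination $\varPhi = \sum_{\cA}\varrho(\cA)\bigl(R^{\pa}_{\nts\cA}-\one\bigr)$ is itself globally Lipschitz. The Picard--Lindel\"of theorem on Banach spaces (see \cite{Amann}) then yields a unique solution of the Cauchy problem, and because the Lipschitz bound is global rather than merely local, this solution exists for all $t\geqslant 0$.

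For the forward invariance of $\cM_{+}(X)$, I would rewrite the equation in a form that exposes positivity. Setting $\lambda := \sum_{\cA\in\PP(S)}\varrho(\cA)$, Eq.~\eqref{eq:short} becomes $\dot{\omega}^{\pa}_{t} = -\lambda\ts\omega^{\pa}_{t} + G(\omega^{\pa}_{t})$ with $G := \sum_{\cA}\varrho(\cA)\ts R^{\pa}_{\nts\cA}$, which maps $\cM_{+}(X)$ into itself by Proposition~\ref{prop:gen-props}(5) together with $\varrho(\cA)\geqslant 0$. Passing to the equivalent integral (Duhamel) form
\begin{equation*}
   \omega^{\pa}_{t} \, = \, \ee^{-\lambda t}\ts\omega^{\pa}_{0}
   + \int_{0}^{t} \ee^{-\lambda (t-s)}\, G(\omega^{\pa}_{s}) \dd s \ts ,
\end{equation*}
I would run the Picard iteration inside the closed, convex cone $\cM_{+}(X)$ (which is complete): if $\omega^{\pa}_{0}\in\cM_{+}(X)$ and the current iterate stays in $\cM_{+}(X)$, then the integrand is a positive measure, its Riemann sums are positive combinations of positive measures, and the norm-limit stays in the closed cone; hence the unique fixed point lies in $\cM_{+}(X)$. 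By the uniqueness from the first step, this fixed point is the solution, so $\cM_{+}(X)$ is forward invariant.

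Norm preservation on $\cM_{+}(X)$ then follows by a direct computation. For a positive measure one has $\|\mu\| = \mu(X)$, and since evaluation at $X$ is a bounded linear functional, the solution gives $\frac{\mathrm{d}}{\mathrm{d}t}\|\omega^{\pa}_{t}\| = \dot{\omega}^{\pa}_{t}(X) = \varPhi(\omega^{\pa}_{t})(X)$. By Proposition~\ref{prop:gen-props}(6) each recombinator preserves the total mass of a positive measure, so $R^{\pa}_{\nts\cA}(\omega^{\pa}_{t})(X) = \omega^{\pa}_{t}(X)$ and every summand of $\varPhi(\omega^{\pa}_{t})(X)$ vanishes. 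Thus $\|\omega^{\pa}_{t}\|$ is constant in $t$. Combining the two properties yields the final claim: if $\omega^{\pa}_{0}\in\cP(X)$, then $\omega^{\pa}_{t}\in\cM_{+}(X)$ with $\|\omega^{\pa}_{t}\| = \|\omega^{\pa}_{0}\| = 1$, that is, $\omega^{\pa}_{t}\in\cP(X)$ for all $t\geqslant 0$.

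I expect the main obstacle to be the forward-invariance step, specifically the rigorous justification that the Duhamel integral of a positive-measure-valued path remains in $\cM_{+}(X)$: this is where one must genuinely use that $\cM_{+}(X)$ is a \emph{closed} convex cone and that the integral is a norm-limit of positive Riemann sums, rather than appeal to finite-dimensional intuition. An alternative would be to invoke a Nagumo-type subtangentiality criterion for closed convex sets in Banach spaces, but the iteration-in-the-cone argument seems cleaner and self-contained given the properties already collected in Proposition~\ref{prop:gen-props}.
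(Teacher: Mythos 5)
Your proof is correct. The paper itself does not prove Proposition~\ref{prop:gensol} but quotes it from \cite{BB,BBS}, and your argument is essentially the one used in those references: global Lipschitz continuity of $\varPhi$ plus Picard--Lindel\"of on the Banach space $\cM(X)$ for existence, uniqueness and global extension; forward invariance of $\cM_{+}(X)$ via the decomposition $\varPhi = G - \lambda\ts\one$ and Picard iteration on the Duhamel form inside the closed convex cone; and norm preservation from the mass conservation of recombinators on positive measures, i.e.\ Proposition~\ref{prop:gen-props}(6), which then yields the invariance of $\cP(X)$.
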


Without loss of generality, when we start with a positive measure, we
may thus restrict our attention to the investigation of the
recombination equation on the cone $\cM_{+} (X)$, and on $\cP (X)$ in
particular. So, let us assume that we consider the Cauchy problem with
$\omega^{\pa}_{0} \in \cP (X)$.

The way to a solution of the recombination equation in previous papers
started with the ansatz
\begin{equation}\label{eq:ansatz}
     \omega^{\pa}_{t} \, = \sum_{\cA\in\PP (S)} \! a^{\pa}_{t} (\cA)
      \, R^{}_{\nts \cA} (\omega^{\pa}_{0})
\end{equation}
which effectively means a complete separation of the time evolution
and the recombination of the initial condition. This led to a
nonlinear ODE system for the coefficient functions $a^{\pa}_{t} (\cA)$
that could be solved recursively in \cite{BBS}, for generic
recombination rates.  Via the identification of an underlying Markov
partitioning process in \cite[Sec.~6]{BBS}, the further analysis of
\cite{interval} showed that these coefficient functions also solve a
\emph{linear} ODE system with constant coefficient matrix, $Q$ say,
which makes the entire solvability understandable in retrospect. As
mentioned in the Introduction, the degenerate cases then correspond to
$Q$ not being diagonalisable.

In this approach, which meant a significant progress and
simplification in comparison to earlier attempts \cite{Lyu,D00,D02}
while being more general at the same time, the number of steps were
still formidable, and another simplification was suggestive. This is
precisely what we want to describe now. Here, the golden key emerges
from also considering the time evolution of $R^{\pa}_{\cB}
(\omega^{\pa}_{t})$ for an arbitrary $\cB\in\PP (S)$, where
$\omega^{\pa}_{t}$ is a solution of the recombination equation
\eqref{eq:reco-eq}. In view of Propositions~\ref{prop:gen-props} and
\ref{prop:gensol}, it suffices to look at probability measures, so
that we get
\[
    \frac{\dd}{\dd t}\ts R^{\pa}_{\cB} (\omega^{\pa}_{t}) \, = \,
    \frac{\dd}{\dd t} \left(  \omega^{B_{1}}_{t}
    \otimes \cdots \otimes  \omega^{B_{\lvert \cB \rvert}}_{t} \right) 
    \, = \,  \sum_{i=1}^{\lvert \cB \rvert}\ts
    \Bigl((\pi^{\pa}_{\nts B_{i}} \nts . \ts \dot{\omega}^{\pa}_{t}) \otimes
    \bigotimes_{j\ne i} \omega^{B_{j}}_{t} \Bigr) .
\]
To proceed, it will be instrumental to understand the behaviour of
recombination on subsystems defined by a set $\varnothing \ne U \nts
\subseteq S$, where we begin by recalling \cite[Lemma~1]{BBS}.  Note
that this result effectively underlies assertion (7) of
Proposition~\ref{prop:gen-props}.

\begin{lemma}\label{lem:technical}
  Let\/ $S$ be a finite set as above, and\/ $\cA = \{ A_{1}, \dots ,
  A_{\lvert \cA \rvert} \} \in \PP(S)$ an arbitrary partition. If\/
  $U\nts\subseteq S$ is non-empty and $\omega\in\cM_{+} (X)$, one has
\[
     \pi^{\pa}_{U} . \bigl( R^{\ts S}_{\nts \cA}  (\omega) \bigr) \, = \, 
     R^{\ts U}_{\nts \cA|^{\pa}_{U}} (\pi^{\pa}_{U}  . \ts \omega) \ts ,
\]   
where\/ $\cA|^{\pa}_{U} \in \PP(U)$ and the upper index of a
recombinator indicates on which measure space it acts, with\/
$R^{S}_{\nts \cA} = R^{\pa}_{\nts \cA}$.  \qed
\end{lemma}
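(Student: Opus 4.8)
The plan is to reduce to the case of a probability measure and then to exploit that $R^{\ts S}_{\nts \cA} (\omega)$ is a product measure, whose marginals factorise. First I would dispose of the trivial case $\omega = 0$, where both sides vanish by the convention $R^{\ts S}_{\nts \cA} (0) = 0$. For $\omega \neq 0$, I would write $\omega = \lVert \omega \rVert \ts \hat{\omega}$ with $\hat{\omega} := \omega / \lVert \omega \rVert \in \cP (X)$; since the pushforward $\pi^{\pa}_{U} . (\cdot)$ is linear and both recombinators are positive homogeneous of degree $1$ by part (1) of Proposition~\ref{prop:gen-props}, both sides of the claimed identity scale by the common factor $\lVert \omega \rVert$. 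It thus suffices to treat $\omega \in \cP (X)$, in which case all occurring marginals are again probability measures and the normalising prefactors in \eqref{eq:def-recomb} equal $1$.

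Next, I would relabel the blocks of $\cA = \{ A_{1}, \dots, A_{\lvert \cA \rvert} \}$ so that $A_{1}, \dots, A_{k}$ are precisely those meeting $U$, while $A_{k+1}, \dots, A_{\lvert \cA \rvert}$ are disjoint from it. By definition of the restriction, $\cA|^{\pa}_{U} = \{ A_{1} \cap U, \dots, A_{k} \cap U \}$, and these $k$ blocks partition $U$. For $\omega \in \cP (X)$ one has the site-ordered product $R^{\ts S}_{\nts \cA} (\omega) = \omega^{A_{1}} \otimes \dots \otimes \omega^{A_{\lvert \cA \rvert}}$, and the crux of the argument is the elementary fact that the marginal of a product measure onto a product of sub-coordinates factorises into the corresponding marginals. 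Projecting onto $U$ retains, within each block $A_{i}$, exactly the coordinates in $A_{i} \cap U$; the blocks with $i > k$ contribute no coordinate and are integrated out to their unit mass $\lVert \omega^{A_{i}} \rVert = 1$, so that
\[
   \pi^{\pa}_{U} . \bigl( R^{\ts S}_{\nts \cA} (\omega) \bigr)
   \, = \, \bigotimes_{i=1}^{k} \bigl( \pi^{\pa}_{A_{i} \cap U} . \ts \omega^{A_{i}} \bigr) .
\]

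Then I would finish via the tower property of pushforwards along composed projections. On the left, $\pi^{A_{i}}_{A_{i} \cap U} \circ \pi^{\ts S}_{A_{i}} = \pi^{\ts S}_{A_{i} \cap U}$, valid since $A_{i} \cap U \subseteq A_{i} \subseteq S$, turns each factor into $\omega^{A_{i} \cap U}$. On the right, the same property with $A_{i} \cap U \subseteq U \subseteq S$ gives $\pi^{U}_{A_{i} \cap U} . \ts \omega^{U} = \omega^{A_{i} \cap U}$, while the prefactor $\lVert \omega^{U} \rVert^{-(k-1)}$ in $R^{\ts U}_{\nts \cA|^{\pa}_{U}} (\omega^{U})$ equals $1$ because $\omega^{U}$ is a probability measure. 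Both sides therefore collapse to $\bigotimes_{i=1}^{k} \omega^{A_{i} \cap U}$, which establishes the claim.

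The step requiring the most care is the factorisation of the marginal of the product measure: one must keep the site ordering consistent under the identification of $X$ with the block-ordered product $\bigtim_{i} X^{\pa}_{A_{i}}$, and verify that the blocks disjoint from $U$ contribute nothing beyond their unit mass. Once the reduction to $\cP (X)$ is in place, however, this is a routine computation with product measures and involves no analytic subtlety.
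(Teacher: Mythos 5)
Your proposal is correct. Note that the paper does not prove this lemma itself: it is recalled verbatim from \cite{BBS} (Lemma~1 there), and the argument given in that reference is essentially the one you reconstruct --- dispose of $\omega = 0$, reduce to $\omega \in \cP(X)$ via positive homogeneity of the recombinators and linearity of the pushforward, and then use that the $U$-marginal of the site-ordered product measure $\omega^{A_{1}} \otimes \dots \otimes \omega^{A_{\lvert\cA\rvert}}$ factorises block by block, with the blocks disjoint from $U$ integrating out to unit mass and the tower property $\pi^{\pa}_{A_{i} \cap U} .\ts \omega^{A_{i}} = \omega^{A_{i} \cap U}$ identifying the remaining factors. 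Your explicit attention to the normalising prefactors (which all equal $1$ after the reduction) and to consistent site ordering covers exactly the points where care is needed, so the proof is complete.
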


To continue, it is clear that we will need the recombination rates on
subsystems defined by some $\varnothing \ne U \subseteq S$, as induced
by the marginalisation
\begin{equation}\label{eq:marg-rates}
    \varrho^{U} \! (\cA) \, = 
     \sum_{\substack{\cB \in \PP(S) \\ \cB|^{\pa}_{U} = \ts \cA}} 
     \! \varrho^{S} (\cB) \ts ,
\end{equation}
where $\varrho^{S} (\cB) = \varrho (\cB)$.  Now,
Lemma~\ref{lem:technical} implies the following marginalisation
consistency on the level of probability measures, where we use the
notation $\omega^{U}_{t} = \pi^{\pa}_{U} \nts . \ts \omega^{\pa}_{t}$
as introduced earlier. The version we state here is a special case of
\cite[Prop.~6]{BBS}; see also Lemma~\ref{lem:marg-discrete} below.

\begin{prop}\label{prop:marg}
  Let\/ $\varnothing \ne U \subseteq S$.  If\/ $\omega^{\pa}_{t}$ is a
  solution of the Cauchy problem of Eq.~\eqref{eq:reco-eq} with
  initial condition\/ $\omega^{\pa}_{0} \in \cP (X)$, the
  marginal measures\/ $(\omega^{U}_{t})^{\pa}_{t\geqslant 0}$ on\/
  $X^{\pa}_{U}$ solve the ODE
\[
      \frac{\dd}{\dd t} \, \omega^{U}_{t} \,= \! \sum_{\cA \in \PP (U)}\!
      \varrho^{U} \! (\cA) \, \bigl( R^{\ts U}_{\nts \cA} - \one \bigr)
      (\omega^{U}_{t})
\]
with initial condition\/ $\omega^{U}_{0} = \pi^{\pa}_{U}\nts . \ts
\omega^{\pa}_{0}$ and marginalised rates\/ $\varrho^{U} \! ( \cA)$
according to Eq.~\eqref{eq:marg-rates}. In particular, one has\/
$\omega^{U}_{t}\! \in \cP (X^{\pa}_{U})$ for all\/ $t\geqslant 0$.  \qed
\end{prop}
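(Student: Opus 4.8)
The plan is to derive the marginal equation directly, by applying the marginalisation map $\pi^{\pa}_{U}$ to the full recombination equation \eqref{eq:reco-eq} and then reorganising the resulting sum. Since $\mu \mapsto \pi^{\pa}_{U} . \ts \mu = \mu \circ \pi^{-1}_{U}$ is a bounded linear operator on the Banach space $(\cM(X), \|.\|)$, it commutes with the time derivative of the curve $t \mapsto \omega^{\pa}_{t}$. Starting from \eqref{eq:reco-eq} and exploiting linearity, I would thus write
\[
    \frac{\dd}{\dd t}\, \omega^{U}_{t} \, = \, \pi^{\pa}_{U} . \ts \dot{\omega}^{\pa}_{t}
    \, = \sum_{\cB \in \PP(S)} \varrho(\cB) \,
    \bigl( \pi^{\pa}_{U} . \ts R^{\ts S}_{\nts \cB}(\omega^{\pa}_{t})
    - \pi^{\pa}_{U} . \ts \omega^{\pa}_{t} \bigr) \ts .
\]

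The decisive input is then Lemma~\ref{lem:technical}, which applies because Proposition~\ref{prop:gensol} guarantees $\omega^{\pa}_{t} \in \cP(X) \subseteq \cM_{+}(X)$ for all\/ $t\geqslant 0$, so that $\pi^{\pa}_{U} . \ts R^{\ts S}_{\nts \cB}(\omega^{\pa}_{t}) = R^{\ts U}_{\nts \cB|^{\pa}_{U}}(\omega^{U}_{t})$. Each summand therefore becomes $\varrho(\cB)\,\bigl(R^{\ts U}_{\nts \cB|^{\pa}_{U}} - \one\bigr)(\omega^{U}_{t})$, now expressed entirely through the marginal measure $\omega^{U}_{t}$ and recombinators acting on\/ $X^{\pa}_{U}$.

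The remaining step is purely combinatorial. Restriction to\/ $U$ partitions the index set\/ $\PP(S)$ into the disjoint classes\/ $\{ \cB \in \PP(S) : \cB|^{\pa}_{U} = \cA \}$ indexed by\/ $\cA \in \PP(U)$, since each\/ $\cB$ restricts to exactly one partition of\/ $U$. Collecting the terms within each class and invoking the definition \eqref{eq:marg-rates} of the marginalised rates, the sum over\/ $\PP(S)$ collapses to\/ $\sum_{\cA \in \PP(U)} \varrho^{U}(\cA)\,\bigl(R^{\ts U}_{\nts \cA} - \one\bigr)(\omega^{U}_{t})$, which is precisely the asserted ODE. The initial condition\/ $\omega^{U}_{0} = \pi^{\pa}_{U} . \ts \omega^{\pa}_{0}$ holds by definition, while\/ $\omega^{U}_{t} \in \cP(X^{\pa}_{U})$ follows either from the fact that the marginal of a probability measure is again one, or, equivalently, by applying Proposition~\ref{prop:gensol} to the marginal Cauchy problem.

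I expect no serious obstacle here. The two points requiring care are the justification that marginalisation commutes with\/ $\dd/\dd t$, which rests on its linearity and boundedness, and the verification that the hypothesis\/ $\omega^{\pa}_{t} \in \cM_{+}(X)$ of Lemma~\ref{lem:technical} holds along the entire trajectory --- which is exactly what the forward invariance of\/ $\cM_{+}(X)$ in Proposition~\ref{prop:gensol} supplies.
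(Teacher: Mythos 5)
Your proof is correct and is essentially the argument the paper has in mind: the paper defers this proposition to \cite[Prop.~6]{BBS}, but the very same computation---push forward with $\pi^{\pa}_{U}$, invoke Lemma~\ref{lem:technical} (justified by forward invariance of $\cM_{+}(X)$ from Proposition~\ref{prop:gensol}), and regroup the sum over $\PP(S)$ into restriction classes to produce the marginalised rates of Eq.~\eqref{eq:marg-rates}---is exactly what the paper carries out for the discrete-time analogue in Lemma~\ref{lem:marg-discrete}. Your two points of care (commuting $\pi^{\pa}_{U}$ with $\dd/\dd t$ via bounded linearity, and positivity along the whole trajectory) are precisely the right ones, so there is nothing to add.
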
       

In this context, it is helpful to also note a factorisation property
of the recombinators on $\cM_{+} (X)$.

\begin{lemma}\label{lem:factor}
  Let\/ $\{U , V \}$ be a partition of $S$, and assume that two
  partitions $\cA\in\PP(U)$ and $\cB\in\PP(V)$ are given. Then, for
  any\/ $0 \ne \mu \in \cM_{+} (X)$, one has
\[
     R^{\pa}_{\nts\cA\sqcup\cB} \ts (\mu) \; = \;
     \frac{1}{\| \mu \|} \,
     R^{\ts U}_{\nts \cA} (\mu^{U}) \otimes 
     R^{V}_{\cB} (\mu^{\nts V}) \ts , 
\]
which simplifies to $R^{\pa}_{\nts\cA\sqcup\cB} \ts (\mu) = 
R^{\ts U}_{\nts \cA} (\mu^{U}) \otimes R^{V}_{\cB} (\mu^{\nts V})$
for\/ $\mu \in \cP (X)$.
\end{lemma}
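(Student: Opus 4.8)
The plan is to expand all three recombinators through their defining formula~\eqref{eq:def-recomb} and to match the tensor factors and the powers of $\| \mu \|$ on both sides. Write $\cA = \{ A_{1}, \dots, A_{p} \}$ and $\cB = \{ B_{1}, \dots, B_{q} \}$, so that $\cA \sqcup \cB = \{ A_{1}, \dots, A_{p}, B_{1}, \dots, B_{q} \}$ is a partition of $S = U \cup V$ with $p + q = \lvert \cA \rvert + \lvert \cB \rvert$ blocks. Since $\mu \in \cM_{+}(X)$ is non-zero, one has $\| \mu \| > 0$; as marginalisation is a pushforward and thus preserves the total mass of a positive measure, the marginals $\mu^{U}$ and $\mu^{V}$ are non-zero as well, with $\| \mu^{U} \| = \| \mu^{V} \| = \| \mu \|$. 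In particular, none of the three recombinators falls into its zero case, and $R^{\ts U}_{\nts \cA}$, $R^{V}_{\cB}$ genuinely apply to $\mu^{U}$, $\mu^{\nts V}$.

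First I would record the consistency of iterated marginals: by transitivity of the projections, $\pi^{\pa}_{A_{i}} = \pi^{\pa}_{A_{i}} \circ \pi^{\pa}_{U}$ for $A_{i} \subseteq U$, whence $(\mu^{U})^{A_{i}} = \mu^{A_{i}}$, and likewise $(\mu^{V})^{B_{j}} = \mu^{B_{j}}$. Together with $\| \mu^{U} \| = \| \mu^{V} \| = \| \mu \|$, the definition then gives
\[
   R^{\ts U}_{\nts \cA} (\mu^{U}) = \frac{\mu^{A_{1}} \otimes \cdots \otimes \mu^{A_{p}}}{\| \mu \|^{p-1}}
   \qquad\text{and}\qquad
   R^{V}_{\cB} (\mu^{\nts V}) = \frac{\mu^{B_{1}} \otimes \cdots \otimes \mu^{B_{q}}}{\| \mu \|^{q-1}} \ts .
\]
The remaining point is the reassembly of the tensor product: because $\{ U, V \}$ is a partition of $S$, every site lies in exactly one block $A_{i}$ or $B_{j}$, so the site-ordered product $\mu^{A_{1}} \otimes \cdots \otimes \mu^{A_{p}} \otimes \mu^{B_{1}} \otimes \cdots \otimes \mu^{B_{q}}$ is a well-defined element of $\cM(X)$ and, under the canonical identification $X \cong X^{\pa}_{U} \times X^{\pa}_{V}$, agrees with $R^{\ts U}_{\nts \cA}(\mu^{U}) \otimes R^{V}_{\cB}(\mu^{\nts V})$ up to the scalar $\| \mu \|^{-(p-1)-(q-1)}$. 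Comparing with the prefactor $\| \mu \|^{-(p+q-1)}$ that~\eqref{eq:def-recomb} attaches to $R^{\pa}_{\nts \cA\sqcup\cB}(\mu)$, the two expressions differ precisely by the factor $1/\| \mu \|$, which is the asserted identity; on $\cP(X)$ we have $\| \mu \| = 1$, so the prefactor drops out and the simplified form follows.

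The calculation is essentially bookkeeping, and I expect the only genuinely delicate step to be the tensor reassembly above, since $U$ and $V$ may interleave within $S$ and $X$ equals $X^{\pa}_{U} \times X^{\pa}_{V}$ only after reordering coordinates. This is exactly what the (implicit) site-ordering convention of~\eqref{eq:def-recomb} is meant to absorb, and it reduces to the standard fact that a finite product measure on $X$ is uniquely determined by its values on product (cylinder) sets; verifying the factorisation there is routine.
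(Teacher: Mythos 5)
Your proof is correct, but it takes a genuinely different route from the paper's. The paper never expands the recombinators: it observes that $\{U,V\} \wedge (\cA \sqcup \cB) = \cA \sqcup \cB$, so that the composition law of Proposition~\ref{prop:gen-props}(7) gives $R^{\pa}_{\nts\cA\sqcup\cB} = R^{\pa}_{\nts\cA\sqcup\cB} \ts R^{\pa}_{\{U,V\}}$ on $\cM_{+}(X)$; it then inserts $R^{\pa}_{\{U,V\}}(\mu) = \frac{1}{\|\mu\|}\, \mu^{U} \!\otimes \mu^{V}$ and uses positive homogeneity (assertion (1)) to land on the claimed factorisation after one application of $R^{\pa}_{\nts\cA\sqcup\cB}$ to this product measure. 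You instead unpack all three recombinators directly from the definition \eqref{eq:def-recomb} and match tensor factors and powers of $\|\mu\|$, resting on two standard facts that you state and use correctly: marginalisation preserves the norm of positive measures, $\|\mu^{U}\| = \|\mu^{V}\| = \|\mu\|$, and iterated marginals are consistent, $(\mu^{U})^{A_{i}} = \mu^{A_{i}}$; your power count $(p-1)+(q-1)+1 = p+q-1$ is right, which is exactly where the extra factor $1/\|\mu\|$ comes from. Both arguments are sound. The paper's route buys brevity and a structural explanation --- the lemma appears as an instance of the lattice-theoretic identity $R^{\pa}_{\nts\cA} R^{\pa}_{\cB} = R^{\pa}_{\nts\cA\wedge\cB}$ already established in \cite{BBS} --- whereas your route buys self-containedness: it needs nothing beyond the definition, at the price of explicit bookkeeping and of the site-ordering discussion, which you rightly flag as the one delicate point (and which the paper's proof also relies on implicitly in its final step, when it applies $R^{\pa}_{\nts\cA\sqcup\cB}$ to the product measure and reads off the factorised form).
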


\begin{proof}
  Observe first that $\{ U,V\} \wedge (\cA \sqcup \cB) = \cA \sqcup
  \cB$ due to our assumptions. By assertions (1) and (7) from
  Proposition~\ref{prop:gen-props}, we then know that
\[
    R^{\pa}_{\nts\cA\sqcup\cB} \ts (\mu) \, = \,  \bigl(
    R^{\pa}_{\nts\cA\sqcup\cB} \, R^{\pa}_{\{ U,V\}} \bigr)
    (\mu) \, = \, R^{\pa}_{\nts\cA\sqcup\cB} \bigl(
    \tfrac{1}{\| \mu \|} \, \mu^{U} \! \otimes \mu^{V} \bigr)
    \, = \,  \frac{1}{\| \mu \|}     
    R^{\ts U}_{\nts \cA} (\mu^{U}) \otimes R^{V}_{\cB} 
     (\mu^{\nts V}) \ts , 
\] 
from which the second claim is immediate.
\end{proof}

We are now set to proceed with solving Eq.~\eqref{eq:reco-eq}.

\section{Solution of the recombination 
equation in continuous time}\label{sec:fast} 

Let us consider the time evolution of $R^{\pa}_{\cB}
(\omega^{\pa}_{t})$ for an arbitrary partition $\cB \in \PP (S)$, here
written as $\cB = \{B^{\pa}_{1}, \ldots, B^{\pa}_{m}\}$. As before, we
assume $\omega^{\pa}_{t}$ to be a solution of the recombination
equation \eqref{eq:reco-eq} with initial condition $\omega^{\pa}_{0}
\in \cP (X)$.  Using the product rule as above, and employing
Proposition~\ref{prop:marg}, we obtain
\begin{eqnarray} 
     \frac{\dd}{\dd t} \ts R^{\pa}_{\cB}  ( \omega^{\pa}_{t}  ) 
    & = & \sum_{i=1}^{m}  \Bigl( \frac{\dd}{\dd t} \, 
       \omega^{B_{i}}_{t} \Bigr)  \otimes
        \bigotimes_{j\ne i} \omega^{B_{j}}_{t} \nonumber \\[1mm]
    & = & \sum_{i=1}^{m} \, \sum_{\cA_{i} \in \PP (B_{i})} \!
        \varrho^{B_{i}} (\cA_{i}) \, \bigl( R^{B_{i}}_{\! \cA_{i}} 
               - \one \bigr) (\omega^{B_{i}}_{t}) \otimes
         \bigotimes_{j\ne i} \omega^{B_{j}}_{t} \nonumber \\[1mm]
    & = & \sum_{i=1}^{m} \, \sum_{\cA_{i} \in \PP(B_i)} \!
       \varrho^{B_{i}} (\cA_{i})  
       \bigl( R^{\pa}_{(\cB \setminus B_i) \sqcup\ts \cA_i} 
        -  R^{\pa}_{\cB} \bigr)  (\omega^{\pa}_{t}) \nonumber \\[1mm]
    & = & \sum_{i=1}^{m} \,
       \sum_{\substack{\cA_{i} \in \PP(B_i) \\ \cA_{i} \ne \{ B_{i}\} } }
        \! \varrho^{B_{i}} (\cA_{i})  
       \bigl( R^{\pa}_{(\cB \setminus B_i) \sqcup\ts \cA_i} 
        -  R^{\pa}_{\cB} \bigr)  (\omega^{\pa}_{t}) \ts ,
        \label{eq:reco-fast}   
 \end{eqnarray}
 where $\cB\setminus B_{i}$ denotes the partition of $S \setminus
 B_{i}$ that emerges from $\cB$ by removing $B_{i}$.  Note that the
 crucial third step follows from Lemma~\ref{lem:factor} used
 backwards. Clearly, we may restrict the inner summation to $\cA_{i}
 \ne \{ B_{i}\}$, as the then omitted term vanishes anyhow, which
 gives the last line. We can now state our main result as follows.

\begin{theorem}\label{thm:main}
  Let\/ $\omega^{\pa}_{t}$ be a solution of the recombination equation
  \eqref{eq:reco-eq}, with initial condition\/ $\omega^{\pa}_{0} \in
  \cP (X)$.  Then, for any partition\/ $\cB \in \PP(S)$, the measure\/
  $R^{\pa}_{\cB} (\omega^{\pa}_{t})$ satisfies an ODE of the linear 
  form
\[
    \frac{\dd}{\dd t}\ts R^{\pa}_{\cB}  ( \omega^{\pa}_{t} )  
    \, = \sum_{\cC \in \PP (S)} Q^{\pa}_{\cB \cC} 
    \ts R^{\pa}_{\cC} (\omega^{\pa}_{t}) \ts ,
\]  
   where the coefficients are explicitly given by
\begin{equation}\label{eq:Q}
  Q^{\pa}_{\cB \cC} :=  \begin{cases}
     \varrho^{B_i} (\cA^{}_i), & 
        \text{if } \ts \cC = (\cB \setminus  B^{}_i) \sqcup \cA_{i} 
        \text{ for some } \{ B_{i} \} \ne \cA_i \in \PP(B_i) \\ &
        \text{and precisely one index } 1 \leqslant i \leqslant 
        \lvert \cB \rvert \ts , \\
    - \sum\limits_{i=1}^{\lvert \cB \rvert}
        \sum\limits_{\substack{\cA_{i} \ne \{ B_{i} \} \\ \cA_{i} \in \PP(B_{i})}  }
        \! \! \varrho^{B_{i}} (\cA_{i}), & \text{if } \cC = \cB \ts , \\
         0, & \text{otherwise}\ts .     \end{cases}
\end{equation}
In particular, the matrix\/ $Q = \bigl( Q^{\pa}_{\cB \cC}
\bigr)_{\cB,\cC\in\PP(S)}$ is a Markov generator with triangular
structure, and\/ $R^{\pa}_{\cB} (\omega^{\pa}_{t})$ is a probability
measure for all\/ $t \geqslant 0$.
\end{theorem}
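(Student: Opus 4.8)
The plan is to read the entire statement off the chain of equalities in Eq.~\eqref{eq:reco-fast}, which already reduces the time derivative of $R^{\pa}_{\cB}(\omega^{\pa}_{t})$ to a linear combination of the measures $R^{\pa}_{\cC}(\omega^{\pa}_{t})$. Concretely, the last line of \eqref{eq:reco-fast} expresses $\frac{\dd}{\dd t}\ts R^{\pa}_{\cB}(\omega^{\pa}_{t})$ as
\[
   \sum_{i=1}^{m}\,\sum_{\substack{\cA_{i} \in \PP(B_{i})\\ \cA_{i} \neq \{ B_{i}\}}}
   \varrho^{B_{i}}(\cA_{i})\,\bigl( R^{\pa}_{(\cB\setminus B_{i})\sqcup\ts \cA_{i}} - R^{\pa}_{\cB}\bigr)(\omega^{\pa}_{t}) \ts .
\]
I would first match the positive terms, indexed by pairs $(i,\cA_{i})$, against the partitions $\cC = (\cB\setminus B_{i})\sqcup \cA_{i}$, and then collect all the $-R^{\pa}_{\cB}$ contributions into the diagonal coefficient. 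Comparing with \eqref{eq:Q} identifies $Q^{\pa}_{\cB\cC}$ in each of its three cases.

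The point that makes this identification clean — and the step I expect to require the most care — is to verify that the assignment $(i,\cA_{i}) \mapsto (\cB\setminus B_{i})\sqcup \cA_{i}$ is injective on the index set of pairs with $\cA_{i}\neq\{B_{i}\}$, so that each target $\cC\neq\cB$ receives a contribution from at most one pair and the ``precisely one index'' clause in \eqref{eq:Q} is a genuine definition rather than an ambiguous sum. For this I would argue that if $\cC = (\cB\setminus B_{i})\sqcup \cA_{i}$ with $\cA_{i}\neq\{B_{i}\}$, then $B_{i}$ is split into at least two blocks of $\cC$, whereas every other block $B_{j}$ with $j\neq i$ remains a block of $\cC$. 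Hence the index $i$ is recovered as the unique block of $\cB$ that fails to be a block of $\cC$, and $\cA_{i}$ is then recovered as the restriction $\cC|^{\pa}_{B_{i}}$; in particular, no two distinct pairs can produce the same $\cC$, which also rules out the degenerate possibility of a target arising from two different blocks being refined simultaneously.

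Having fixed the coefficients, I would verify the three asserted structural properties. Nonnegativity of the off-diagonal entries is immediate, since the marginalised rates $\varrho^{B_{i}}(\cA_{i})$ are nonnegative by \eqref{eq:marg-rates} together with the nonnegativity of the original rates. The vanishing of the row sums — hence the generator property — then follows from the same bookkeeping: by the injectivity just established, summing the off-diagonal entries $Q^{\pa}_{\cB\cC}$ over all $\cC\neq\cB$ reproduces precisely the double sum $\sum_{i}\sum_{\cA_{i}\neq\{B_{i}\}}\varrho^{B_{i}}(\cA_{i})$, which equals $-Q^{\pa}_{\cB\cB}$. For the triangular structure I would note that each target $\cC = (\cB\setminus B_{i})\sqcup \cA_{i}$ with $\cA_{i}\neq\{B_{i}\}$ is strictly finer than $\cB$, i.e.\ $\cC \prec \cB$; therefore $Q^{\pa}_{\cB\cC}=0$ unless $\cC\preccurlyeq\cB$, so that $Q$ is triangular with respect to any linear extension of the refinement order $\preccurlyeq$ on $\PP(S)$. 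Finally, that $R^{\pa}_{\cB}(\omega^{\pa}_{t})$ is a probability measure for all $t$ is not part of the ODE computation at all: by Proposition~\ref{prop:gensol} the solution stays in $\cP(X)$, and by assertion~(6) of Proposition~\ref{prop:gen-props} the recombinator $R^{\pa}_{\cB}$ maps $\cP(X)$ into itself, which gives the claim.
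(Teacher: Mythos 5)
Your proposal is correct and follows essentially the same route as the paper's own proof: both read the linear structure directly off Eq.~\eqref{eq:reco-fast}, identify the off-diagonal and diagonal coefficients by matching terms, and deduce the probability-measure claim from Proposition~\ref{prop:gensol} together with assertion~(6) of Proposition~\ref{prop:gen-props}. The only difference is that you spell out the injectivity of $(i,\cA_{i}) \mapsto (\cB\setminus B_{i})\sqcup \cA_{i}$, a point the paper's proof treats as evident when it states that a coefficient with $\cC \prec \cB$ vanishes unless $\cC$ refines precisely one part of $\cB$; this is a welcome (and correct) extra detail, not a different approach.
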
  
  
\begin{proof}
  It is clear from Eq.~\eqref{eq:reco-fast} that the derivative of
  $R^{\pa}_{\cB} (\omega^{\pa}_{t})$ can indeed be written as a 
  linear combination, namely
\[
    \frac{\dd}{\dd t}\ts R^{\pa}_{\cB}  ( \omega^{\pa}_{t}) \, 
       = \sum_{\cC \prec \cB} Q^{\pa}_{\cB \cC} 
          \bigl( R^{\pa}_{\cC}  -  R^{\pa}_{\cB} \bigr) 
             (\omega^{\pa}_{t}) \, 
       =  \sum_{\cC \preccurlyeq \cB} Q^{\pa}_{\cB \cC} \ts
          R^{\pa}_{\cC} (\omega^{\pa}_{t}) \,
        =  \! \sum_{\cC \in \PP (S)} \! \! Q^{\pa}_{\cB \cC} \ts
          R^{\pa}_{\cC} (\omega^{\pa}_{t})   \ts ,
\]
where the second step follows from a simple change of summation, while
the third just reflects the fact that $Q$ has a triangular
structure. It remains to show that the coefficients $Q^{\pa}_{\cB
  \cC}$ are those given by Eq.~\eqref{eq:Q}.

If $\cC \prec \cB$, Eq.~\eqref{eq:reco-fast} tells us that this
coefficient must vanish unless $\cC$ refines precisely one part of
$\cB$, in which case its value is as claimed, and non-negative due to
our general assumption on the recombination coefficients.  When
$\cC=\cB$, we read from our change of summation that all non-diagonal
coefficients of the row of $Q$ defined by $\cB$ must occur on the
diagonal once, with negative sign. All other coefficients clearly
vanish.

The Markov generator property is then clear, and the last claim
follows from our general properties of the recombinators in
conjunction with Proposition~\ref{prop:gensol}.
\end{proof}  
  
The meaning of the Markov generator $Q$ will become clear in the next
section. Let us now define the (column) vector $\varphi^{\pa}_t :=
\bigl( \varphi^{\pa}_{t} (\cB) \bigr)_{\cB \in \PP(S)}$ with $
\varphi^{\pa}_{t} (\cB) := R^{\pa}_{\cB} (\omega^{\pa}_{t})$. With
this abbreviation, the ODEs from Theorem~\ref{thm:main} now turn into
the \emph{linear} ODE system
\[
    \frac{\dd}{\dd t} \,
    \varphi^{\pa}_{t} \, = \, Q \ts \varphi^{\pa}_{t} 
\]
with initial condition $\varphi^{\pa}_{0}= \bigl( R^{\pa}_{\cB}
(\omega^{\pa}_{0}) \bigr)_{\cB \in \PP(S)}$ and solution
\[
     \varphi^{\pa}_{t} \, = \, \ee^{t \ts Q} \ts \varphi^{\pa}_{0} \ts .
\]
Note that $\{ \ee^{t\ts Q} \mid t \geqslant 0 \}$ is the Markov
semigroup generated by $Q$. In particular, for the first component of
$\varphi^{\pa}_{t}$, we now get
\begin{equation}\label{eq:reco-sol}
     \omega^{\pa}_{t} \, = \, \varphi^{}_t (\pmax) \, =  
     \! \sum_{\cA\in\ts\PP(S)}\! \! a^{\pa}_{t} 
     (\cA) \, R^{\pa}_{\cA} (\omega^{\pa}_{0}),
\end{equation}
with $a^{\pa}_{t} (\cA) = (\ee^{t \ts Q})^{\pa}_{\pmax \cA}$, which
leads us back to Eq.~\eqref{eq:ansatz}.  Clearly, $\sum_{\cA \in
  \PP(S)} a^{\pa}_{t} (\cA) =1$ since each $\ee^{t \ts Q}$ with $t
\geqslant 0$ is a Markov matrix, so all row sums are $1$.

Via the Kolmogorov forward equation for the Markov semigroup $\{ e^{t
  Q} \mid t \geqslant 0 \}$, compare \cite[Thm.~2.1.1]{Norris}, the
following consequence is now immediate; see \cite{BBS,interval} for
the original (but much longer) derivation.
\begin{coro}
   Under the assumptions of Theorem~$\ref{thm:main}$, the coefficient
   functions of Eq.~\eqref{eq:reco-sol} are\/ $a^{\pa}_{t} (\cA) = 
   ( e^{t Q})^{\pa}_{\pmax \cA}$, with the Markov 
   generator\/ $Q$ as in Eq.~\eqref{eq:Q}, and satisfy the ODEs
\[
     \dot{a}^{\pa}_{t} (\cA) \, = \sum_{\cB \succcurlyeq \cA}
     a^{\pa}_{t} (\cB) \, Q^{\pa}_{\cB \cA}
\]   
   with initial conditions\/ $a^{\pa}_{0} (\cA) = 
   \delta^{\pa}_{\pmax \cA}$.   In particular, $\bigl(a^{\pa}_{t} 
   (\cA)\bigr)_{\cA\in \PP(S)}$  is a probability vector for
   all\/ $t\geqslant 0$.    \qed
\end{coro}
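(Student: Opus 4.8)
The plan is to read the corollary directly off the semigroup solution $\varphi^{\pa}_{t} = \ee^{t\ts Q}\ts \varphi^{\pa}_{0}$ obtained right after Theorem~\ref{thm:main}, without any further analysis of the recombination equation itself. Writing $P(t) := \ee^{t\ts Q}$ for the associated Markov semigroup, the identity $a^{\pa}_{t}(\cA) = (\ee^{t\ts Q})^{\pa}_{\pmax\cA} = P(t)^{\pa}_{\pmax\cA}$ exhibits the coefficient vector $\bigl( a^{\pa}_{t}(\cA) \bigr)_{\cA}$ as precisely the $\pmax$-row of $P(t)$. Everything then rests on the elementary fact that the matrix exponential commutes with its generator, so that $\dot{P}(t) = Q\ts P(t) = P(t)\ts Q$; the second (forward) form is the one I would use here.

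First I would differentiate componentwise. Using the forward equation $\dot{P}(t) = P(t)\ts Q$, one has $\dot{a}^{\pa}_{t}(\cA) = \bigl( P(t)\ts Q \bigr)^{\pa}_{\pmax\cA} = \sum_{\cB\in\PP(S)} P(t)^{\pa}_{\pmax\cB}\ts Q^{\pa}_{\cB\cA} = \sum_{\cB\in\PP(S)} a^{\pa}_{t}(\cB)\ts Q^{\pa}_{\cB\cA}$. To obtain the stated summation range, I would invoke the triangular structure of $Q$ established in Theorem~\ref{thm:main}: from Eq.~\eqref{eq:Q}, the entry $Q^{\pa}_{\cB\cA}$ vanishes unless $\cA\preccurlyeq\cB$, equivalently $\cB\succcurlyeq\cA$, so the sum collapses to $\sum_{\cB\succcurlyeq\cA} a^{\pa}_{t}(\cB)\ts Q^{\pa}_{\cB\cA}$, which is exactly the claimed ODE.

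The initial conditions are immediate from $P(0) = \one$, giving $a^{\pa}_{0}(\cA) = \one^{\pa}_{\pmax\cA} = \delta^{\pa}_{\pmax\cA}$. For the final assertion, I would use that $Q$ is a Markov generator (Theorem~\ref{thm:main}), so that each $P(t) = \ee^{t\ts Q}$ with $t\geqslant 0$ is a stochastic matrix, with non-negative entries and all row sums equal to $1$. The $\pmax$-row is exactly $\bigl( a^{\pa}_{t}(\cA) \bigr)_{\cA}$, which is therefore a probability vector; the row-sum part of this was in fact already recorded below Eq.~\eqref{eq:reco-sol}.

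There is no substantial obstacle here, as the statement is a direct corollary of the semigroup representation. The only point requiring a moment's care is to work with the forward form $\dot{P} = P\ts Q$ rather than the backward form $\dot{P} = Q\ts P$, since only the former places the free index $\cA$ on the right-hand factor and thereby reproduces the asymmetric, triangular summation $\sum_{\cB\succcurlyeq\cA}$ appearing in the stated ODE. Both forms are of course valid for the matrix exponential, so this is a matter of bookkeeping rather than a genuine difficulty.
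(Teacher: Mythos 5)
Your proposal is correct and follows exactly the paper's route: the paper declares the corollary ``immediate'' from the Kolmogorov forward equation $\dot{P}(t) = P(t)\ts Q$ for the Markov semigroup $P(t) = \ee^{t\ts Q}$ (citing Norris), and your argument simply spells out that same forward-equation computation, including the restriction of the sum via the triangular structure of $Q$ and the stochasticity of $\ee^{t\ts Q}$ for the probability-vector claim.
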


Let us pause to discuss what Eq.~\eqref{eq:reco-sol} tells us. First,
the solution of the recombination equation may be expressed in terms
of a \emph{convex combination} of the initial measure recombined in
all possible ways. This is quite plausible, given that the
differential equation means a continuous replacement of the current
measure by its recombined versions. Second, the procedure just
described has uncovered a \emph{linear} structure that underlies the
nonlinear recombination equation, and thus reduced the problem to a
linear one. With hindsight, we recognise a streamlined version of
Haldane linearisation: The $R^{\pa}_{\cB} (\omega^{\pa}_{t})$ with
$\cB\ne \pmax$ can be viewed as additional components that are used to
enlarge the system in order to unravel its intrinsic linear
structure. The latter is conveyed by the Markov generator $Q$, whose
meaning still remains to be elucidated, as will be done next.

\section{The backward point of view: Partitioning 
process}\label{sec:partitioning}

Now that we have understood the structure of the ODE system for the
coefficients $a^{}_t$ in the usual (forward) direction of time, let 
us consider a related (stochastic) process that will provide an
additional meaning for $a^{}_t$.  Let
$\{\varSigma^{}_t\}^{\pa}_{t\geqslant 0}$ be a Markov chain in
continuous time with values in $\PP(S)$ that is constructed as
follows. Start with $\varSigma^{}_0= \pmax$. If the current state is
$\varSigma^{}_t=\cB$, then part $B_{i}$ of $\cB$, with $1 \leqslant i
\leqslant |\cB|$, is replaced by $\{ B_{i} \} \ne \cA_i \in \PP(B_i)$
at rate $\varrho^{B_i} (\cA_i)$, independently of all other
parts. That is, the transition from $\cB$ to $(\cB \setminus B_{i})
\sqcup \cA_{i}$ occurs at rate $\varrho^{B_{i}} (\cA_{i})$ for all $\{
B_{i} \} \ne \cA_{i} \in \PP(B_{i}) $ and $1 \leqslant i \leqslant
|\cB|$.  Put differently, the transition from $\cB$ to $\cC$ happens
at rate $Q^{\pa}_{\cB\cC}$ of Eq.~\eqref{eq:Q}.

This way, we have given a meaning to the generator $Q$ of
Section~\ref{sec:fast}: It holds the transition rates of the process
of progressive refinements $\{\varSigma_{t} \}^{\pa}_{t \geqslant 0}$,
which we have just described, and which we call the underlying
\emph{partitioning process}. The argument is illustrated in
Figure~\ref{fig:tree}.

Since $Q$ is the Markov generator of
$\{\varSigma_t\}^{\pa}_{t\geqslant 0}$, we can further conclude that
\[
    (\ee^{t \ts Q})^{\pa}_{\cB \cC} \, = 
    \, \mathbf{P} \bigl( \varSigma_{t} = 
    \cC \mid \varSigma_0 = \cB \bigr)
\]    
(where $\mathbf{P}$ denotes probability), that is, the transition
probability from `state' $\cB$ to `state' $\cC$ during a time interval
of length $t$.  In particular, $a^{\pa}_{t} (\cA) = \bigl(\ee^{t \ts
  Q} \bigr)_{\pmax\cA} = \mathbf{P} \bigl( \varSigma^{\pa}_{t} = \cA
\mid \varSigma^{\pa}_{0} = \pmax \bigr)$.  We have therefore shown our
second main result, which can now be stated as follows.

\begin{theorem}\label{thm:a-part}
  The probability vector\/ $a^{\pa}_{t}$ from Eqs.~\eqref{eq:ansatz}
  and \eqref{eq:reco-sol} agrees with the distribution of the
  partitioning process\/ $\{\varSigma_{t} \}^{\pa}_{t \geqslant 0}$.
  Explicitly, we have
\[
      a^{\pa}_{t} (\cA) \, = \, 
     \mathbf{P} \bigl(\varSigma^{\pa}_{t} = \cA \mid 
        \varSigma^{\pa}_{0} = \pmax \bigr)
\]
for any\/ $\cA \in \PP(S)$ and all\/ $t \geqslant 0$.  \qed
\end{theorem}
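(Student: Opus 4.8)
The plan is to obtain the statement as a direct consequence of the preceding Corollary together with the standard dictionary between a finite-state continuous-time Markov chain and its generator. The Corollary already supplies the explicit form $a^{\pa}_{t} (\cA) = (\ee^{t \ts Q})^{\pa}_{\pmax \cA}$, so the entire remaining task is to recognise this matrix entry as the transition probability of the partitioning process $\{\varSigma_{t}\}^{\pa}_{t \geqslant 0}$ from $\pmax$ to $\cA$. In other words, nothing new has to be computed; one only has to match two descriptions of the same object.

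First, I would confirm that the matrix $Q$ of Eq.~\eqref{eq:Q} is precisely the generator of the chain $\{\varSigma_{t}\}$ as constructed at the start of this section. By construction, for $\cC \ne \cB$ the jump rate from $\cB$ to $\cC$ is nonzero exactly when $\cC = (\cB \setminus B^{}_{i}) \sqcup \cA_{i}$ for a single index $i$ and some $\{ B_{i} \} \ne \cA_{i} \in \PP(B_{i})$, in which case it equals $\varrho^{B_{i}} (\cA_{i})$, while the diagonal entry $Q^{\pa}_{\cB \cB}$ is the negative total exit rate from $\cB$. Thus the off-diagonal entries are non-negative and the rows sum to zero, so $Q$ is a genuine Markov generator on the finite set $\PP(S)$, in exact agreement with Eq.~\eqref{eq:Q}. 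Second, I would invoke the standard result that, for a continuous-time Markov chain on a finite state space with generator $Q$, the transition semigroup is the unique solution $P^{}_{t} = \ee^{t \ts Q}$ of the Kolmogorov equations with $P^{}_{0} = \one$, and that its entries are the transition probabilities, $(\ee^{t \ts Q})^{\pa}_{\cB \cC} = \mathbf{P} \bigl( \varSigma_{t} = \cC \mid \varSigma_{0} = \cB \bigr)$; see \cite[Thm.~2.1.1]{Norris}. Specialising to $\cB = \pmax$ (the starting state of the process) and $\cC = \cA$, and combining with the Corollary, then gives $a^{\pa}_{t} (\cA) = (\ee^{t \ts Q})^{\pa}_{\pmax \cA} = \mathbf{P} \bigl( \varSigma_{t} = \cA \mid \varSigma_{0} = \pmax \bigr)$, as claimed.

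The only point requiring care is a bookkeeping one: I must ensure that the row/column indexing of $Q$ (source state labelling the row, target state the column) is used consistently between the coefficient ODE $\dot{a}^{\pa}_{t} = a^{\pa}_{t} \ts Q$ from the Corollary and the forward equation $\dot{P}^{}_{t} = P^{}_{t} \ts Q$ for the transition matrix, so that the $\pmax$-row of $\ee^{t \ts Q}$ is genuinely the common object. Once this convention is fixed, there is no analytic obstacle whatsoever, since the finiteness of $\PP(S)$ makes the matrix exponential and the semigroup identification entirely elementary. The mathematical substance lies not here but in Theorem~\ref{thm:main}, where the nonlinear dynamics was shown to close into the linear system governed by $Q$; the present result merely reinterprets the resulting coefficients probabilistically.
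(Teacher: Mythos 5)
Your proposal is correct and is essentially the paper's own argument: the paper likewise identifies the matrix $Q$ of Eq.~\eqref{eq:Q} with the generator of the partitioning process $\{\varSigma^{\pa}_{t}\}^{\pa}_{t\geqslant 0}$ constructed at the start of the section, and then reads off $a^{\pa}_{t}(\cA) = \bigl(\ee^{t\ts Q}\bigr)^{\pa}_{\pmax\cA} = \mathbf{P}\bigl(\varSigma^{\pa}_{t}=\cA \mid \varSigma^{\pa}_{0}=\pmax\bigr)$ from the standard correspondence between a finite-state continuous-time chain and its transition semigroup. Your additional bookkeeping remark on the row-indexing convention is consistent with the paper's forward-equation formulation, so nothing further is needed.
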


\begin{figure}
\includegraphics[width=\textwidth]{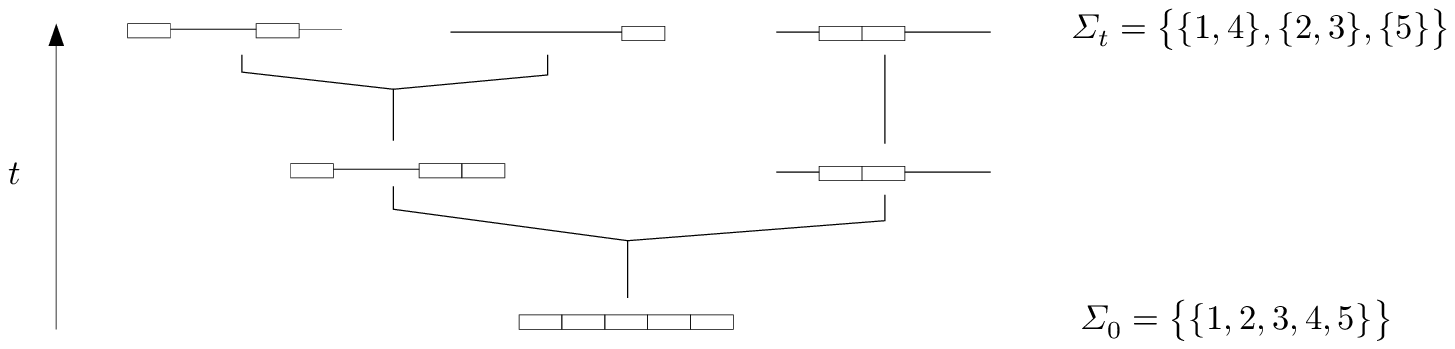}
\caption{\label{fig:tree} Illustrative realisation of the partitioning
  process.}
\end{figure}

\section{Recombination equation in discrete 
time}\label{sec:discrete}

Let us turn our attention to the discrete-time analogue of
Eq.~\eqref{eq:reco-eq}, which is often considered in population
genetics \cite{Buerger, D02,Lyu,SM}. We can use the same general setting 
of Section~\ref{sec:partitions}, in particular the space $\cM (X)$ of
measures and the action of the recombinators on it. Then, one has to
consider the nonlinear iteration
\begin{equation}\label{eq:reco-discrete}
   \omega^{\pa}_{t+1} \, = \!  \sum_{\cA \in \PP (S)} \!
   r (\cA) \, R^{\pa}_{\cA} (\omega^{\pa}_{t}) \ts ,
\end{equation}
where the parameters are now recombination \emph{probabilities}, so $r
(\cA) \geqslant 0$ for $\cA\in \PP (S)$ together with
$\sum_{\cA\in\PP(S)} r (\cA) = 1$. Moreover, $t\in \NN_{0}$ denotes
discrete time (counting generations, say) with initial condition
$\omega^{\pa}_{0}$. Clearly, the positive cone $\cM_{+} (X)$ is
preserved under the iteration, as is the norm of a positive
measure. In view of the general properties of the recombinators from
Proposition~\ref{prop:gen-props}, we can further confine our
discussion to $\omega^{\pa}_{0}\in \cP (X)$, which immediately implies
that $\omega^{\pa}_{t} \in \cP (X)$ for all $t\in\NN$ as well.

As above in Proposition~\ref{prop:marg}, we need marginalisation
consistency for subsystems. A generalisation of \cite[Eq.~2.5]{Chris}
to our setting leads to the following result.
\begin{lemma}\label{lem:marg-discrete}
  Let\/ $\omega^{\pa}_{t}$ with\/ $t\in\NN_{0}$ be a solution of the
  discrete recombination equation \eqref{eq:reco-discrete}, with
  initial condition\/ $\omega^{\pa}_{0}\in \cP (X)$. Then, for any\/
  $\varnothing \ne \nts U \subseteq S$, the marginal measures\/
  $\omega^{U}_{t} \! $ satisfy the induced recombination equation
\[
     \omega^{\ts U}_{t+1} \, = \! \sum_{\cA\in\PP (U)} \!\!
     r^{U} \! (\cA) \, R^{\ts U}_{\nts \cA} (\omega^{U}_{t}) \ts ,
\]    
    for all\/ $t\in\NN_{0}$, with\/ $R^{\ts U}_{\nts \cA}$ as before.
    Here,
\[
      r^{U}\! (\cA) \; := \! \sum_{\substack{\cB\in\PP(S) \\ 
        \cB|^{\pa}_{U} = \cA}} \! \! r (\cB)  \, \geqslant \, 0
\]   
    are the induced recombination probabilities for the subsystem,
    with\/ $\sum_{\cA\in\PP (U)} r^{U}\! (\cA) = 1$.
\end{lemma}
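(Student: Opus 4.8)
The plan is to apply the marginalisation map $\pi^{\pa}_{U}.$ to the iteration \eqref{eq:reco-discrete} and then combine the two ingredients already at our disposal: the commutation of projection with recombinators supplied by Lemma~\ref{lem:technical}, and the fact that the restriction map $\cB \mapsto \cB|^{\pa}_{U}$ is a well-defined function from $\PP(S)$ onto $\PP(U)$, so that its fibres partition $\PP(S)$.

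First I would record that $\omega^{\pa}_{t} \in \cP(X)$ for all $t$, as noted just before the lemma, so in particular $\omega^{\pa}_{t} \in \cM_{+}(X)$ and Lemma~\ref{lem:technical} applies at every step. Since $\mu \mapsto \pi^{\pa}_{U}.\mu$ is \emph{linear} on $\cM(X)$ — even though the recombinators themselves are not — applying it to both sides of \eqref{eq:reco-discrete} and then pushing it through each recombinator by Lemma~\ref{lem:technical} gives
\[
  \omega^{U}_{t+1} \,=\, \sum_{\cA \in \PP(S)} r(\cA)\, \pi^{\pa}_{U}.\bigl(R^{\ts S}_{\nts \cA}(\omega^{\pa}_{t})\bigr) \,=\, \sum_{\cA \in \PP(S)} r(\cA)\, R^{\ts U}_{\nts \cA|^{\pa}_{U}}(\omega^{U}_{t}) \ts .
\]
This already contains all the analytic content; what remains is purely combinatorial bookkeeping on the index set.

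Next I would regroup the sum according to the value of the restricted partition $\cA|^{\pa}_{U} \in \PP(U)$. Collecting, for each fixed $\cC \in \PP(U)$, all $\cA$ with $\cA|^{\pa}_{U} = \cC$, and using that $R^{\ts U}_{\nts \cC}$ depends only on this restricted partition, yields
\[
  \omega^{U}_{t+1} \,=\, \sum_{\cC \in \PP(U)} \Bigl( \sum_{\substack{\cA \in \PP(S) \\ \cA|^{\pa}_{U} = \cC}} r(\cA) \Bigr) R^{\ts U}_{\nts \cC}(\omega^{U}_{t}) \,=\, \sum_{\cC \in \PP(U)} r^{U}(\cC)\, R^{\ts U}_{\nts \cC}(\omega^{U}_{t}) \ts ,
\]
which is exactly the claimed induced recombination equation. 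The normalisation $\sum_{\cC \in \PP(U)} r^{U}(\cC) = 1$ falls out of the same fibre decomposition: the double sum merely re-sums $r$ over all of $\PP(S)$, and $\sum_{\cB \in \PP(S)} r(\cB) = 1$ by hypothesis.

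I do not expect a genuine obstacle here. The only point that deserves a moment of care is that the regrouping in the last step is legitimate precisely because $\cA \mapsto \cA|^{\pa}_{U}$ is a genuine function $\PP(S) \to \PP(U)$, so each $\cB$ contributes to exactly one marginal probability $r^{U}(\cC)$ and no double counting occurs. Everything analytic — that projection commutes with recombination on $\cM_{+}(X)$ — is delivered wholesale by Lemma~\ref{lem:technical}, while the linearity of $\pi^{\pa}_{U}.$ is exactly what allows us to move it past the nonlinear recombinators term by term.
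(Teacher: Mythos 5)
Your proof is correct and follows essentially the same route as the paper: apply the linear projection $\pi^{\pa}_{U}.$ to the iteration, push it through each recombinator via Lemma~\ref{lem:technical}, and regroup the sum over the fibres of $\cB \mapsto \cB|^{\pa}_{U}$ to obtain the induced rates $r^{U}$. Your explicit verification of $\sum_{\cA\in\PP(U)} r^{U}(\cA)=1$ via the same fibre decomposition is a small addition the paper leaves implicit, but the argument is identical in substance.
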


\begin{proof}
The claim follows from a simple calculation,
\[
\begin{split}
    \omega^{U}_{t+1} \; & = \; \pi^{\pa}_{U}  . \ts \omega^{\pa}_{t+1}
    \; = \; \pi^{\pa}_{U} \ts .  \! \sum_{\cB\in\PP (S)} \! r (\cB) \,
    R^{\pa}_{\cB} (\omega^{\pa}_{t}) \; = \sum_{\cB\in\PP (S)} \! r(\cB) \,
    \bigl(\pi^{\pa}_{U} . \ts R^{\pa}_{\cB} (\omega^{\pa}_{t}) \bigr) \\[1mm]
    & = \sum_{\cB\in\PP (S)} \! r(\cB) \, R^{\ts U}_{\cB|^{}_{U}}\nts
    (\omega^{U}_{t}) \; = \! \sum_{\cA\in\PP (U)} \, 
    \sum_{\substack{\cB \in\PP (S) \\ \cB|^{}_{U} = \cA}}
    \! r(\cB) \, R^{\ts U}_{\nts \cA} (\omega^{U}_{t}) \; = \!
    \sum_{\cA\in\PP (U)} \! r^{U} \! (\cA) \, R^{\ts U}_{\nts \cA}
    (\omega^{U}_{t}) \ts ,
\end{split}
\]
where the first step in the second line is a consequence of 
Lemma~\ref{lem:technical}.
\end{proof}

Let now $\cB = \{ B^{\pa}_{1}, \ldots , B^{\pa}_{m}\}$ be an arbitrary
partition of $S$, and consider
\[
\begin{split}
    R^{\pa}_{\cB} (\omega^{\pa}_{t+1}) \, & = \,
    \omega^{B_{1}}_{t+1} \otimes \dots \otimes
    \omega^{B_{m}}_{t+1} \, = \; \bigotimes_{i=1}^{m} \,
     \sum_{\cA_{i}\in\PP(B_{i})} \!\! r^{B_{i}} (\cA_{i}) \, 
    R^{B_{i}}_{\nts \cA_{i}} (\omega^{B_{i}}_{t})  \\[1mm]
    & = \sum_{\cA_{1} \in \PP (B_{1})}  \! \cdots \!\!
        \sum_{\cA_{m}\in\PP(B_{m})} \biggl(\,
         \prod_{i=1}^{m}  r^{B_{i}} (\nts \cA_{i}) \biggr)
        \, R^{B_{1}}_{\nts \cA_{1}} (\omega^{B_{1}}_{t}) \otimes
        \dots \otimes R^{B_{m}}_{\nts \cA_{m}} (\omega^{B_{m}}_{t}) \ts ,
\end{split}
\]
where we have used Lemma~\ref{lem:marg-discrete}. Invoking the
factorisation property from Lemma~\ref{lem:factor}, we thus see that
we can rewrite the last expression as a linear combination of terms of
the form $R^{\pa}_{\cC} (\omega^{\pa}_{t})$ with $\cC \in \PP(S)$
being a refinement of $\cA_{1} \sqcup \dots \sqcup \cA_{m}$. We can
now formulate the following general result, which also resembles some
recent findings from \cite{SM}.

\begin{theorem}\label{thm:discrete}
  Let\/ $\omega^{\pa}_{t}$ with\/ $t\in\NN_{0}$ be a solution of the
  discrete recombination equation \eqref{eq:reco-discrete}, with
  initial condition\/ $\omega^{\pa}_{0}\in \cP (X)$. Then, for any\/
  $\cB\in\PP (S)$ and any\/ $t\in\NN_{0}$, the measure\/
  $R^{\pa}_{\cB} (\omega^{\pa}_{t})$ is a probability measure and
  satisfies the linear recursion
\[
    R^{\pa}_{\cB} (\omega^{\pa}_{t+1}) \;  =  \,
    \sum_{\cC\preccurlyeq \cB}  M^{\pa}_{\cB \cC} \,
    R^{\pa}_{\cC} (\omega^{\pa}_{t}) \; = 
    \sum_{\cC \in \PP (S)} \! M^{\pa}_{\cB \cC} \,
    R^{\pa}_{\cC} (\omega^{\pa}_{t}) 
\]
with the coefficients
\begin{equation}\label{eq:M-mat}
    M^{\pa}_{\cB \cC} \, = \, \begin{cases}
    \prod_{i=1}^{\lvert \cB \rvert} r^{B_{i}} (\cC |^{\pa}_{B_{i}}) ,
    & \text{if } \cC \preccurlyeq \cB , \\
    0 , & \text{otherwise}. \end{cases}
\end{equation}
In particular, $M = \bigl(M^{\pa}_{\cB \cC}\bigr)_{\cB,\cC\in\PP (S)}$
is a triangular Markov matrix.
\end{theorem}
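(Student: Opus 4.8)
The plan is to take the expansion of $R^{\pa}_{\cB} (\omega^{\pa}_{t+1})$ that was just derived as the starting point and turn it into the claimed recursion by a single reindexing of the summation. Writing $\cB = \{ B^{\pa}_{1}, \dots, B^{\pa}_{m}\}$, that expansion already displays $R^{\pa}_{\cB} (\omega^{\pa}_{t+1})$ as a sum over tuples $(\cA_{1}, \dots, \cA_{m})$ with $\cA_{i} \in \PP(B_{i})$, the summand carrying the coefficient $\prod_{i=1}^{m} r^{B_{i}} (\cA_{i})$ and the measure $\bigotimes_{i=1}^{m} R^{B_{i}}_{\nts \cA_{i}} (\omega^{B_{i}}_{t})$. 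First I would apply Lemma~\ref{lem:factor}, iterated over the $m$ blocks of $\cB$ (which themselves form a partition of $S$), backwards: since $\omega^{\pa}_{t} \in \cP (X)$, the tensor product equals $R^{\pa}_{\cA_{1} \sqcup \dots \sqcup \cA_{m}} (\omega^{\pa}_{t})$. Thus each summand is a single recombinator applied to $\omega^{\pa}_{t}$, indexed by the joining $\cA_{1} \sqcup \dots \sqcup \cA_{m} \in \PP(S)$.

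The combinatorial heart is then the observation that assigning to a tuple $(\cA_{1}, \dots, \cA_{m})$ with $\cA_{i} \in \PP(B_{i})$ its joining $\cC := \cA_{1} \sqcup \dots \sqcup \cA_{m}$ sets up a bijection onto the set $\{ \cC \in \PP(S) \mid \cC \preccurlyeq \cB \}$, with inverse given by restriction, $\cA_{i} = \cC|^{\pa}_{B_{i}}$. Indeed, a partition $\cC$ refines $\cB$ precisely when every block of $\cC$ lies inside a single block $B_{i}$, so the blocks of $\cC$ contained in $B_{i}$ form exactly $\cC|^{\pa}_{B_{i}} \in \PP(B_{i})$, and these pieces reassemble to $\cC$ under joining. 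Reindexing the sum along this bijection immediately yields the coefficients $M^{\pa}_{\cB \cC} = \prod_{i=1}^{m} r^{B_{i}} (\cC|^{\pa}_{B_{i}})$ for $\cC \preccurlyeq \cB$ and $0$ otherwise, as in Eq.~\eqref{eq:M-mat}, together with the two equivalent forms of the recursion; the triangular structure is read off directly, since $M^{\pa}_{\cB \cC}$ can be nonzero only when $\cC \preccurlyeq \cB$.

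It then remains to verify the Markov property and the probability-measure claim. Non-negativity of the entries is clear from $r^{U} (\cA) \geqslant 0$ in Lemma~\ref{lem:marg-discrete}. For the row sums, I would run the same bijection backwards and factorise, so that $\sum_{\cC \preccurlyeq \cB} M^{\pa}_{\cB \cC} = \prod_{i=1}^{m} \bigl( \sum_{\cA_{i} \in \PP(B_{i})} r^{B_{i}} (\cA_{i}) \bigr) = 1$, using the normalisation $\sum_{\cA \in \PP(U)} r^{U} (\cA) = 1$ from Lemma~\ref{lem:marg-discrete} for each factor. Finally, that $R^{\pa}_{\cB} (\omega^{\pa}_{t})$ is a probability measure follows from assertion (6) of Proposition~\ref{prop:gen-props} together with $\omega^{\pa}_{t} \in \cP (X)$, which was noted after Eq.~\eqref{eq:reco-discrete}.

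I expect no serious obstacle here: the only point that needs care is verifying that the joining--restriction correspondence is genuinely a bijection (so that no refinement $\cC \preccurlyeq \cB$ is counted twice or missed) and that Lemma~\ref{lem:factor} is correctly iterated to all $m$ blocks at once. Both become routine once the block-wise structure of the refinements of $\cB$ is made explicit, and it is precisely this bijection that lets the nonlinear iteration collapse into a linear recursion with the stated triangular Markov matrix.
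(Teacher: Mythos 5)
Your proposal is correct and follows essentially the same route as the paper: the expansion over tuples $(\cA_{1},\dots,\cA_{m})$ via Lemma~\ref{lem:marg-discrete}, the backwards (iterated) use of Lemma~\ref{lem:factor}, the reference to Proposition~\ref{prop:gen-props}(6) for the probability-measure claim, and the factorised row-sum computation are exactly the paper's argument. The only difference is that you spell out the joining--restriction bijection between tuples and refinements $\cC \preccurlyeq \cB$, which the paper dismisses as ``a straight-forward exercise''; making it explicit is a welcome addition, not a deviation.
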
  

\begin{proof}
  The first claim, as mentioned earlier, is a direct consequence of
  part (6) of Proposition~\ref{prop:gen-props}.  Our above
  calculation, in conjunction with Lemma~\ref{lem:factor}, proves the
  second claim, where the determination of the coefficients
  $M^{\pa}_{\cB \cC}$ is a straight-forward exercise.

  Clearly, we have $M^{\pa}_{\cB \cC} \geqslant 0$ for all $\cB,\cC
  \in \PP (S)$ because $r (\cA) \geqslant 0$ by assumption, hence also
  $r^{U} \! (\cB) \geqslant 0$ for all $\varnothing \ne \nts U
  \subseteq S$ by the formula in Lemma~\ref{lem:marg-discrete}. It
  remains to show that each row of $M$ sums to $1$. Indeed, given any
  $\cB \in \PP (S)$, one has
\[
   \sum_{\cC\in\PP (S)} \! M^{\pa}_{\cB \cC} \, = \sum_{\cC\preccurlyeq \cB}
   M^{\pa}_{\cB \cC} \, = \, \prod_{i=1}^{\lvert \cB \rvert} \,
   \sum_{\cA_{i}\in \PP (B_{i})}  \!\! r^{B_{i}} (\cA_{i}) \, = \, 1 
\]
because $\bigl( r^{U} \! (\cA) \bigr)_{\cA \in \PP (U)}$ is a
probability vector for all $\varnothing\ne\nts U \subseteq S$ by
Lemma~\ref{lem:marg-discrete}.
\end{proof}

Note that the matrix entry $M^{\pa}_{\cB \cB}$ is the probability that
`nothing happens' to partition $\cB$ in one step. These diagonal
entries are, due to the triangular structure, the eigenvalues of $M$.
For a special case of our setting, they have been determined earlier,
by rather different methods and without reference to a linear
structure, in \cite[Thm.~6.4.3]{Lyu}; see also \cite{NHB,WBB}.
Theorem~\ref{thm:discrete} now gives them a clearer meaning in a more
general setting. Also, the degenerate cases that had to be excluded in
previous attempts \cite{Lyu,D02} precisely correspond to the cases
where $M$ fails to be diagonalisable. They pose no problem in the
above approach.

At this point, we can repeat our previous interpretation. If
$\varphi^{\pa}_{t} = \bigl( R^{\pa}_{\nts \cA} (\omega^{\pa}_{t})
\bigr)_{\cA\in\PP (S)}$ is considered as a column vector, with
$\omega^{\pa}_{t}$ a solution of the recombination equation, we get
$\varphi^{\pa}_{t+1} = M \varphi^{\pa}_{t}$, and hence
\[
      \varphi^{\pa}_{t} \, = \, M^{t} \ts \varphi^{\pa}_{0}
\]
for all $t \in \NN_{0}$. The first component of the vector is
\[
    \omega^{\pa}_{t} \, = \, \varphi^{\pa}_{t} (\pmax)
    \, = \, \bigl( M^{t} \varphi^{\pa}_{0}\bigr) (\pmax)
    \, = \! \sum_{\cA\in\PP (S)} \! a^{\pa}_{t} (\cA) \,
    R^{\pa}_{\cA} (\omega^{\pa}_{0}) \ts ,
\]
with $a^{\pa}_{t} (\cA) = (M^{t})^{\pa}_{\pmax \cA}$. In particular,
$a^{\pa}_{0} (\cA) = \delta^{\pa}_{\pmax \cA}$.
\smallskip

There is again an underlying stochastic process, in analogy to the
continuous-time case in Section~\ref{sec:partitioning}. Here, $\{
\varSigma^{\pa}_{t} \}^{\pa}_{t \in \NN_{0}}$ is a Markov chain in
discrete time with values in $\PP (S)$, starting at
$\varSigma^{\pa}_{0} = \pmax$. When $\varSigma^{\pa}_{t} = \cB$, in
the time step from $t$ to $t+1$, part $B_{i}$ of $\cB$ is replaced by
$\cA_{i} \in \PP (B_{i})$ with probability $r^{B_{i}}_{\cA_{i}}$,
independently for each $1 \leqslant i \leqslant \lvert \cB
\rvert$. Note that, unlike in the continuous-time case, \emph{several}
parts can be refined in one step, which makes the discrete-time case
actually more complicated. Of course, $\cA_{i} = \{ B_{i}\}$, which
means no action on this part, is also possible. Put together, it is
not difficult to verify that one ends up precisely with the transition
matrix $M$ from Eq.~\eqref{eq:M-mat}.

\section*{Acknowledgements}
This work was supported by the German Research Foundation (DFG),
within the SPP 1590.

\smallskip

\end{document}